\newtheorem{theorem}{Theorem}[section]
\newtheorem{lemma}[theorem]{Lemma}
\newtheorem{corollary}[theorem]{Corollary}
\begin{document}
	\textwidth 150mm \textheight 225mm
	\title{On the distance signless Laplacian spectral radius, fractional
		matching and factors of graphs
		\thanks{Supported by the National Natural Science Foundation of China (No. 12271439)}}
	\author{{Zhenhao Zhang$^{a,b}$, Ligong Wang$^{a,b,}$\thanks{Corresponding author.}}\\
		{\small $^a$School of Mathematics and Statistics, Northwestern
			Polytechnical University,}\\ {\small  Xi'an, Shaanxi 710129,
			P.R. China.}\\
		{\small $^b$ Xi'an-Budapest Joint Research Center for Combinatorics, Northwestern
			Polytechnical University,}\\
		{\small Xi'an, Shaanxi 710129,
			P.R. China. }\\
		{\small E-mail: zhangzhenhao@mail.nwpu.edu.cn, lgwangmath@163.com} }
	\date{}
	\maketitle
	\begin{center}
		\begin{minipage}{120mm}
			\vskip 0.3cm
			\begin{center}
				{\small {\bf Abstract}}
			\end{center}
			{\small   The distance signless Laplacian matrix of a graph $G$ is define as $Q(G)=$Tr$(G)+D(G)$, where Tr$(G)$ and $D(G)$ are  the diagonal matrix of vertex transmissions and the distance matrix of $G$, respectively. Denote by $E_G(v)$ the set of all edges incident to a vertex $v$ in $G$.  A fractional matching of a graph $G$ is a function $f:E(G) \rightarrow [0,1]$ such that $\sum_{e\in E_G(v)} f(e)\leq 1$ for every vertex $v\in V(G)$. The fractional matching number $\mu_f(G)$  of a graph $G$ is the maximum value of  $ \sum_{e\in E(G)} f(e)$ over all fractional matchings. Given subgraphs $H_1, H_2,...,H_k$ of $G$,  a $\{H_1, H_2,...,H_k\}$-factor of   $G$ is a spanning subgraph $F$ in which each connected component is isomorphic to one of $H_1, H_2,...,H_k$.
				In this paper, we establish a upper bound for the  distance signless Laplacian spectral radius of a graph $G$ of order $n$ to guarantee that $\mu_f(G)> \frac{n-k}{2}$, where $1\leq k<n$ is an integer. Besides, we also provide a sufficient condition based on distance signless Laplacian spectral radius to guarantee the existence of a $\{K_2,\{C_k\}\}$-factor in  a graph, where $k \geq 3$ is an integer.

				\vskip 0.1in \noindent {\bf Key Words}: Distance signless Laplacian spectral radius, Fractional matching, Factor. \vskip
				0.1in \noindent {\bf AMS Subject Classification (2020)}: \ 05C50, 05C35. }
		\end{minipage}
	\end{center}
	
	\section{Introduction }
	\label{sec:ch6-introduction}
	\noindent
	
	In this paper, all considered graphs are simple, connected and undirected. Let $G$ be a graph with vertex set $V(G)$ and edge set $E(G)$. The order of $G$ is the number of vertices in $V(G)$. Let $N_G(v)$ be the set of all neighbours of a vertex $v$ in $G$. Let $d_G(v)=|N_G(v)|$ be the degree of a vertex $v$ in $G$. For a subset  $S \subseteq V(G)$, we define $N_G(S)=\bigcup_{v\in S}N_G(v)$. Let $\delta(G)$ denote the minimum degree of $G$. If $d_G(v)=0$, then $v$ is a isolated vertex of $G$ .
	Denote by $I(G)$ the set of isolated vertices in $G$, and $i(G)=|I(G)|$. Let $E_G(v)=\{e \in E(G)\mid \text{$e$ is incident with  $v$ in $G$} \}$.
	Let $G$ and $H$ be two graphs. If $V(H)\subseteq V(G)$ and $E(H)\subseteq E(G)$, then $H$ is called a subgraph of $G$, denoted by $H\leq G$.
	Let $V_1 \subseteq V(G)$, $G-V_1$ is the graph formed from $G$ by deleting all
	the vertices in $V_1$ and their incident edges. For two graphs $G_1$ and $G_2$, we use $G_1+G_2$ to denote the union of $G_1$ and $G_2$ with vertex set $V(G_1)\cup V(G_2)$ and edge set $E(G_1)\cup E(G_2)$. The join $G_1\vee G_2$ is the graph obtained from vertex-disjoint graphs $G_1$ and $G_2$ by adding all possible edges between $V(G_1)$ and $V(G_2)$. The complement of $G$, denoted by $\overline{G}$, is a graph on the same vertex set $V$ such that two distinct vertices $u$ and $v$ are adjacent in $\overline{G}$ if and only if they are not adjacent in $G$.
	Let $K_n$, $C_n$ and $P_n$ denote the complete graph, the cycle and the path of order $n$, respectively.
	
	A fractional matching of a graph $G$ is a function $f:E(G) \rightarrow [0,1]$ such that $\sum_{e\in E_G(v)} f(e)$ $\leq 1$ for every vertex $v\in V(G)$. The fractional matching number of $G$, denoted by $\mu_f(G)$, is the maximum value of  $ \sum_{e\in E(G)} f(e)$ over all fractional matchings $f$ of $G$. Given subgraphs $H_1, H_2,...,H_k$ of $G$,  a $\{H_1, H_2,...,H_k\}$-factor of   $G$ is a spanning subgraph $F$ in which each connected component is isomorphic to one of $H_1, H_2,...,H_k$.
	
	The distance between two vertices $v_i$ and $v_j$ of a graph $G$, denoted by $d_G(v_i,v_j)$ or $d_{i, j}$,  is the length of a shortest path connecting $v_i$ and $v_j$ in $G$. The distance matrix of $G$ is defined as $D(G)=(d_{i,j})$, where is a symmetric real matrix. For $v_i \in V(G)$, the transmission of $v_i$ in $G$, denoted by $\text{Tr}_G(v_i)$, is defined as the sum of distances joining $v_i$ and all other vertices of $ G$.  The distance signless Laplacian matrix of $G$, denoted by $Q(G)$, is defined as $Q(G)=\text{Tr}(G)+D(G)$ where
	Tr$(G)= diag(\text{Tr}_G(v_1),\text{Tr}_G(v_2),...,\text{Tr}_G(v_n))$ is the diagonal matrix of the vertex transmissions in $G$.
	The distance signless Laplacian spectral radius of $G$, denoted by $\eta(G)$, is the largest eigenvalue of $Q(G)$.

	Scheinerman and Ullman \cite{Sch} proposed a formula for calculating $\mu_f(G)$ of a graph $G$. Tutte \cite{Tut} established a necessary and sufficient condition for the existence of a $\{K_2,\{C_k\}\}$-factor in a graph. Building on these results, Lou et al. \cite{Lou} derived a tight lower bound for the spectral radius to guarantee $\mu_f(G)>\frac{n-k}{2}$ in a graph $G$ with minimum degree $\delta$, and a tight sufficient condition in terms of the spectral radius for the existence of a $\{K_2,\{C_k\}\}$-factor in a graph with minimum degree $\delta$. Xu and Xi \cite{Xu} extended the above two results to the distance spectral radius of a graph. Pan et al. \cite{Pan} established a relationship between the fractional matching number and the signless Laplacian spectral radius of a graph involving minimum degree. Li and Sun \cite{Li} established a sharp lower bound on the distance signless Laplacian spectral radius among all graphs of order $n$ with given matching number. Liu and Yan \cite{Liu} further refined this result. Yan et al. \cite{Yan} derived a lower bound of distance  Laplacian spectral radius among all graphs of order $n$ with given fractional matching number.
	%
	
	In this paper, we establish a tight upper bound of the distance signless Laplacian spectral radius to guarantee $\mu_f(G)>\frac{n-k}{2}$ in a graph $G$ of order $n$, where $1\leq k<n$ is an integer. We also provide a tight sufficient condition in terms of the distance signless Laplacian spectral radius for the existence of a $\{K_2,\{C_k\}\}$-factor in  a graph, where $k \geq 3$ is an integer.

	\section{Preliminaries}\label{sec:Preliminaries}
	
	In this section, we will give some useful concepts and lemmas which will be used later.
	
Let $M$ be a square matrix and let $\eta(M)$ be the spectral radius of $M$. Suppose $A=(a_{i,j})$ and $B=(b_{i,j})$ are two square matrices of order $n$. Define $A \leq B$ if $ a_{i,j} \leq b_{i,j}$ for any $ i,j \in \{1,2,...,n\}$.
	\begin{lemma}\label{lem1}
		(\cite{Ber})
		Suppose $A=(a_{i,j})$ and $B=(b_{i,j})$ are two nonnegative  square matrices of order $n$.  If $A \leq B$, then $\eta(A) \leq \eta(B)$.
	\end{lemma}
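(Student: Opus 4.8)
The plan is to deduce this from Gelfand's spectral-radius formula combined with the fact that the entrywise order on nonnegative matrices is preserved by matrix multiplication. First I would record the elementary observation that if $0\le P\le P'$ and $0\le Q\le Q'$ entrywise, then $0\le PQ\le P'Q'$: each entry $(PQ)_{i,j}=\sum_{\ell} P_{i,\ell}Q_{\ell,j}$ is a sum of products of nonnegative reals, which can only grow when the factors grow. Feeding the hypothesis $0\le A\le B$ into this statement inductively gives $0\le A^{k}\le B^{k}$ for every integer $k\ge 1$.

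Second, I would fix any norm on the space of $n\times n$ matrices that is monotone on the nonnegative cone, for example the entrywise $\ell_1$-norm $\|M\|=\sum_{i,j}|m_{i,j}|$ or the maximum row sum $\|M\|_{\infty}=\max_i\sum_j|m_{i,j}|$; both are submultiplicative and satisfy $\|M\|\le\|M'\|$ whenever $0\le M\le M'$. From $A^{k}\le B^{k}$ we then get $\|A^{k}\|^{1/k}\le\|B^{k}\|^{1/k}$ for all $k$, and Gelfand's formula $\eta(M)=\lim_{k\to\infty}\|M^{k}\|^{1/k}$ lets me pass to the limit and conclude $\eta(A)\le\eta(B)$.

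An alternative, purely Perron--Frobenius argument works too: for $\varepsilon>0$ the positive (hence irreducible) matrices $A+\varepsilon J$ and $B+\varepsilon J$, with $J$ the all-ones matrix, satisfy $A+\varepsilon J\le B+\varepsilon J$; using the Collatz--Wielandt identity $\eta(M)=\max_{x>0}\min_i (Mx)_i/x_i$ for irreducible nonnegative $M$ and choosing $x^{\ast}>0$ attaining it for $A+\varepsilon J$, one finds $((B+\varepsilon J)x^{\ast})_i\ge((A+\varepsilon J)x^{\ast})_i\ge\eta(A+\varepsilon J)\,x^{\ast}_i$ for every $i$, so $\eta(B+\varepsilon J)\ge\eta(A+\varepsilon J)$, and letting $\varepsilon\to0^{+}$ together with continuity of the spectral radius in the matrix entries gives $\eta(A)\le\eta(B)$. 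Since this is a classical fact (the paper attributes it to \cite{Ber}), I do not expect any real obstacle; the only thing to be careful about is keeping all arguments inside the nonnegative cone, where the entrywise order is compatible both with matrix products and with the chosen monotone norm.
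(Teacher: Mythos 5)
The paper does not prove this lemma at all: it is quoted as a known result from Berman--Plemmons \cite{Ber}, so there is no in-paper argument to compare against. Your proposal supplies a genuine proof, and both of your routes are correct. The Gelfand route is airtight: nonnegativity makes the entrywise order multiplicative, so $0\le A^{k}\le B^{k}$ for all $k$, and any monotone submultiplicative norm (entrywise $\ell_{1}$ or maximum row sum both qualify) turns this into $\|A^{k}\|^{1/k}\le\|B^{k}\|^{1/k}$, whence $\eta(A)\le\eta(B)$ in the limit. The Collatz--Wielandt route is also fine; the $\varepsilon J$ perturbation is exactly the right device to sidestep reducibility (without it, the Collatz--Wielandt maximum for a reducible $A$ need not be attained at a strictly positive vector, and the pointwise comparison $((B)x^{\ast})_i\ge\eta(A)x^{\ast}_i$ could fail at indices where $x^{\ast}_i=0$), and continuity of eigenvalues in the entries lets you pass to $\varepsilon\to0^{+}$. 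The only stylistic remark is that the first argument is preferable here because it needs no irreducibility or continuity input at all; either one would serve as a self-contained replacement for the citation.
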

	
	Let $G$ be a graph with $e \not \in E(G)$.  The distance between any two vertices in $G+e$ is less  than or equal to those in $G$. Then $D(G+e)\leq D(G)$, Tr$(G+e)\leq $Tr$(G)$ and $Q(G+e)\leq Q(G)$.  The distance signless Laplacian matrix of a graph $G$ is nonnegative, so $\eta(G+e)\leq \eta(G)$.
	
	\begin{corollary}\label{cor1}
		
		Let	$G$  and $ H$ be two graphs and $e \not \in E(G)$. Then $\eta(G+e)\leq \eta(G)$. If $H\leq G$, then $\eta(G)\leq \eta(H)$.
	\end{corollary}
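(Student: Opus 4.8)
The plan is to read both inequalities off Lemma~\ref{lem1} together with the elementary fact --- already recorded in the paragraph just before the statement --- that inserting an edge into a graph cannot increase any pairwise distance. In fact the first inequality $\eta(G+e)\le\eta(G)$ is essentially proved there already; the only genuinely new content is the passage to arbitrary (spanning) subgraphs, which I would handle by a short induction on the number of edges added.

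\textbf{Edge addition.} Fix $e\notin E(G)$ and compare $G$ with $G+e$ on the common vertex set $V(G)=V(G+e)$. Every $u$--$v$ path of $G$ survives in $G+e$, so $d_{G+e}(u,v)\le d_G(u,v)$ for all $u,v$; hence $D(G+e)\le D(G)$ entrywise, and taking row sums gives $\mathrm{Tr}_{G+e}(v)\le\mathrm{Tr}_G(v)$ for each $v$, i.e. $\mathrm{Tr}(G+e)\le\mathrm{Tr}(G)$ as diagonal matrices. Adding the two comparisons, $Q(G+e)=\mathrm{Tr}(G+e)+D(G+e)\le\mathrm{Tr}(G)+D(G)=Q(G)$ entrywise. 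Both matrices are nonnegative (positive distances off the diagonal, nonnegative transmissions on it), so Lemma~\ref{lem1} yields $\eta(G+e)\le\eta(G)$.

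\textbf{Subgraphs.} Assume $H\le G$; for the matrix comparison to make sense I take $H$ to be spanning (if $V(H)\subsetneq V(G)$ one first restricts to the spanning subgraph of $G$ on $V(H)$, which is the only case needed in the applications). Writing $E(G)\setminus E(H)=\{e_1,\dots,e_t\}$ and setting $G_0=H$, $G_i=G_{i-1}+e_i$ for $1\le i\le t$, we have $G_t=G$, and applying the edge-addition step along this chain gives $\eta(G)=\eta(G_t)\le\eta(G_{t-1})\le\cdots\le\eta(G_0)=\eta(H)$, which is the second assertion.

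There is no real obstacle here: the statement is a bookkeeping corollary of Lemma~\ref{lem1}. The only point needing a moment's care is to carry out the comparison $Q(G+e)\le Q(G)$ entrywise on a fixed common index set and to confirm nonnegativity of the matrices involved (automatic for connected graphs on at least two vertices); monotonicity of the Perron root under entrywise domination then settles the edge case, and the subgraph case is the obvious finite iteration.
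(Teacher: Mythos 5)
Your proof is correct and follows essentially the same route as the paper: the edge-addition inequality is obtained exactly as in the paragraph preceding the corollary (entrywise comparison $Q(G+e)\le Q(G)$ plus Lemma~\ref{lem1}), and the subgraph case is the obvious finite iteration, with your remark that $H$ must be a connected spanning subgraph being a reasonable reading of the paper's implicit convention. No issues.
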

	
	Suppose $M$ is a square matrix of order $n$, $\mathcal{N}=\{1, 2, . . . , n\}$ and $\Pi:\mathcal{N}=\mathcal{N}_1\cup \mathcal{N}_2\cup . . . \cup \mathcal{N}_k$ is a partition of $\mathcal{N}$.
	The matrix $M$ can be partitioned into the following block form
	\[M=\begin{pmatrix}
		M_{1, 1}&M_{1, 2}& . . . & M_{1, k} \\
		M_{2, 1}& M_{2, 2}& . . . &M_{2, k} \\
		\vdots&\vdots&\ddots&\vdots\\
		M_{k, 1}& M_{k, 2}& . . . &M_{k, k} \\
	\end{pmatrix}.\]
	
	The quotient matrix of $M$ with respect to $\Pi$, denoted by $B_{\Pi}=(b_{i, j})$, is the $k\times k$ matrix whose entry $b_{i, j}$ is the average row sums of the block $M_{i, j}$. The partition $\Pi$ is called equitable if each block $M_{i, j}$ of $M$ has constant row sum. In this case, the quotient matrix $B_{\Pi}$ is also referred to as equitable.

	\begin{lemma}\label{lem3}	(\cite{Bro, God})
		Let $M$ be a real symmetric matrix and $B_{\Pi}$ be an equitable quotient matrix of $M$. Then the eigenvalues of $B_{\Pi}$ are also eigenvalues of $M$. Furthermore, if $M$ is nonnegative and irreducible, then the largest eigenvalue of $B_{\Pi}$ is also the largest eigenvalue of $M$.	
	\end{lemma}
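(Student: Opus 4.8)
The plan is to realize the quotient matrix through the characteristic (indicator) matrix of the partition and then transfer spectral information between $M$ and $B_\Pi$ via an explicit intertwining identity.

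First I would introduce the $n\times k$ characteristic matrix $S=(s_{i,j})$ of the partition $\Pi$, defined by $s_{i,j}=1$ if $i\in\mathcal{N}_j$ and $s_{i,j}=0$ otherwise; its columns $s_1,\dots,s_k$ are the indicator vectors of the nonempty pairwise disjoint cells, so $S$ has full column rank $k$, and $S^{\mathsf T}S=\mathrm{diag}(|\mathcal{N}_1|,\dots,|\mathcal{N}_k|)$ is invertible. The equitability hypothesis says exactly that each block $M_{i,j}$ has constant row sum $b_{i,j}$, and I would record this as the single matrix identity $MS=SB_\Pi$: for $p\in\mathcal{N}_i$ the $(p,j)$ entry of $MS$ equals $\sum_{q\in\mathcal{N}_j}(M)_{p,q}=b_{i,j}$, while the $(p,j)$ entry of $SB_\Pi$ is $(B_\Pi)_{i,j}=b_{i,j}$.

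For the first assertion, if $B_\Pi v=\lambda v$ with $v\neq 0$, then $M(Sv)=SB_\Pi v=\lambda\,(Sv)$ and $Sv\neq 0$ since $S$ has full column rank; hence every eigenvalue of $B_\Pi$ is an eigenvalue of $M$. For the second assertion, put $U=\mathrm{col}(S)$; the identity $MS=SB_\Pi$ shows $U$ is $M$-invariant, and since $M$ is symmetric, $U^{\perp}$ is $M$-invariant as well, so the spectrum of $M$ is the union (with multiplicity) of the spectrum of the compression of $M$ to $U$ --- which, expressed in the basis $s_1,\dots,s_k$, is precisely the spectrum of $B_\Pi$ --- together with the spectrum of the compression of $M$ to $U^{\perp}$. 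In particular $\eta(B_\Pi)\le\eta(M)$. For the reverse inequality I would invoke the Perron--Frobenius theorem: as $M$ is nonnegative and irreducible it has an eigenvector $x$ for $\eta(M)$ with all entries strictly positive. Decomposing $x=x_U+x_{U^{\perp}}$ with $x_U\in U$ and $x_{U^{\perp}}\in U^{\perp}$, the invariance of both subspaces forces $Mx_U=\eta(M)x_U$, and $x_U\neq 0$ because $\langle x,s_1\rangle=\sum_{i\in\mathcal{N}_1}x_i>0$ shows $x$ is not orthogonal to $U$. Thus $\eta(M)$ is an eigenvalue of the compression of $M$ to $U$, hence of $B_\Pi$, and therefore $\eta(B_\Pi)=\eta(M)$.

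The step I expect to be the main obstacle is this last inequality $\eta(B_\Pi)\ge\eta(M)$: the eigenvalue transfer in the first part only goes one way, and one has to use irreducibility in an essential manner --- through the strict positivity of the Perron vector --- to guarantee that the dominant eigenvector of $M$ has a nonzero component along $\mathrm{col}(S)$; without irreducibility the value $\eta(M)$ could be carried entirely by the invariant complement $U^{\perp}$, and the conclusion would fail.
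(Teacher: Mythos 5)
Your proof is correct. The paper states this lemma without proof, citing it from Brouwer--Haemers and Godsil--Royle, and your argument via the characteristic matrix $S$ of the partition, the intertwining identity $MS=SB_{\Pi}$, the invariant-subspace decomposition $U\oplus U^{\perp}$, and the strict positivity of the Perron vector to show the dominant eigenvector is not orthogonal to $\mathrm{col}(S)$ is precisely the standard proof given in those references; in particular you correctly identify irreducibility as the hypothesis that makes the second assertion work.
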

	
	\begin{lemma}\label{lem4}
		(\cite{Sch})
		Let $G$ be a graph of order $n$. Then
		\[\mu_f(G)=\frac{1}{2}(n-\max\{i(G-S)-|S|: \forall S\subseteq V(G)\}).\]
	\end{lemma}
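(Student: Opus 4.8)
Write $d(G)=\max\{i(G-S)-|S|:S\subseteq V(G)\}$. This is the fractional analogue of the Berge--Tutte formula, and I would prove the two inequalities $\mu_f(G)\le\frac12(n-d(G))$ and $\mu_f(G)\ge\frac12(n-d(G))$ separately. For the upper bound, fix any $S\subseteq V(G)$, any fractional matching $f$, and set $I=I(G-S)$. Partition $E(G)$ into the edges meeting $S$ and the edges lying inside $G-S$. Summing the constraint $\sum_{e\in E_G(v)}f(e)\le1$ over $v\in S$ shows the total $f$-weight on edges meeting $S$ is at most $|S|$; summing the same constraint over $v\in V(G)\setminus(S\cup I)$ and dividing by $2$ shows the total $f$-weight on edges of $G-S$ is at most $\frac12(n-|S|-|I|)$ --- here the point is that every vertex of $I$ is isolated in $G-S$ and hence lies on no edge of $G-S$. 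Adding these, $\sum_{e\in E(G)}f(e)\le|S|+\frac12(n-|S|-|I|)=\frac12(n-(i(G-S)-|S|))$, and since $f$ and $S$ were arbitrary this gives $\mu_f(G)\le\frac12(n-d(G))$.

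For the lower bound I would pass to the linear-programming dual. The quantity $\mu_f(G)$ is the optimum of the feasible, bounded linear program $\max\{\sum_{e\in E(G)}f(e):\ \sum_{e\in E_G(v)}f(e)\le1\ \text{for all }v,\ f\ge0\}$, so by strong duality $\mu_f(G)=\tau_f(G):=\min\{\sum_{v\in V(G)}y_v:\ y_u+y_v\ge1\ \text{for all }uv\in E(G),\ y\ge0\}$, the minimum fractional vertex cover. The crucial ingredient is that this minimum is attained at a half-integral point $y^\ast\in\{0,\tfrac12,1\}^{V(G)}$: the feasible region lies in the nonnegative orthant and hence contains no line, so the minimum is attained at an extreme point, and an extreme point cannot have any coordinate in $(0,\tfrac12)\cup(\tfrac12,1)$ --- otherwise, perturbing all such coordinates simultaneously by $\pm\varepsilon$ (raising those below $\tfrac12$, lowering those above $\tfrac12$, and then the reverse) keeps every edge constraint satisfied for small $\varepsilon$ and exhibits the point as a midpoint of two feasible points.

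Given such a $y^\ast$, set $A=\{v:y^\ast_v=1\}$, $B=\{v:y^\ast_v=\tfrac12\}$, $C=\{v:y^\ast_v=0\}$, so that $\mu_f(G)=|A|+\tfrac12|B|$. The covering constraints rule out any edge inside $C$ and any edge between $C$ and $B$, so every neighbour of a vertex of $C$ lies in $A$ and all of $C$ consists of isolated vertices of $G-A$. Taking $S=A$,
\[
d(G)\ \ge\ i(G-A)-|A|\ \ge\ |C|-|A|\ =\ n-2|A|-|B|\ =\ n-2\mu_f(G),
\]
i.e.\ $\mu_f(G)\ge\frac12(n-d(G))$; together with the upper bound this yields the asserted equality.

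I expect the only real obstacle to be the half-integrality step: one must check that the perturbation preserves every tight edge constraint (the tight ones with exactly one non-half-integral endpoint turn out to be impossible, and the remaining tight ones have their two endpoint values unchanged or changed by $+\varepsilon$ and $-\varepsilon$), and that the infimum is genuinely attained at an extreme point. An alternative avoiding duality works with a half-integral maximum fractional matching, whose support decomposes into a weight-$1$ matching together with vertex-disjoint weight-$\tfrac12$ odd cycles; but then, letting $U$ be the set of uncovered vertices, $S=N_G(U)$ is not the right choice and one needs the finer Gallai--Edmonds-type structure, so the dual route looks cleanest.
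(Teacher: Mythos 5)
The paper gives no proof of this lemma; it is quoted directly from Scheinerman and Ullman \cite{Sch}, so there is no internal argument to compare against. Your proposal is a correct, self-contained proof and is essentially the standard route to the fractional Berge--Tutte formula: the counting argument for $\mu_f(G)\le\frac12\bigl(n-(i(G-S)-|S|)\bigr)$ is sound (each edge of $G-S$ is counted twice over $V(G)\setminus(S\cup I)$ and no edge of $G-S$ touches $I$), and the lower bound via LP strong duality $\mu_f=\tau_f$ plus half-integrality of extreme fractional vertex covers is exactly the mechanism used in the source. Your perturbation argument for half-integrality is complete: on a tight edge the two values are either both $\tfrac12$, or $\{0,1\}$, or lie in $(0,\tfrac12)$ and $(\tfrac12,1)$ respectively so the $\pm\varepsilon$ shifts cancel, and the optimum is attained at an extreme point because the feasible region is pointed and the objective is bounded below. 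The only cosmetic remark is that you could shorten the lower bound by invoking half-integrality of an optimal fractional \emph{matching} and the Gallai-type identity directly, but as you note the dual (vertex-cover) formulation avoids the structural analysis of odd cycles and is the cleaner choice; nothing further is needed.
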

	
	\begin{lemma}\label{lem5}
		\cite{Tut, Lou}
		Let $G$ be a graph and let $k \geq 3$ be an integer. G has a $\{K_2,\{C_k\}\}$-factor if and only if $i(G-S)\leq |S|$ for every $ S\subseteq V(G)$.
		
	\end{lemma}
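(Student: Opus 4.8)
This is a classical result (the cited sources are Tutte's factor theorem and its reformulation by Lou et al.), so the plan is to reconstruct its short proof. I would treat the two implications separately, the key device being an \emph{associated bipartite graph}: with $V(G)=\{v_1,\dots,v_n\}$, let $B$ be the bipartite graph on $V^{+}\cup V^{-}$, where $V^{+}=\{v_1^{+},\dots,v_n^{+}\}$ and $V^{-}=\{v_1^{-},\dots,v_n^{-}\}$ are two copies of $V(G)$, and $v_i^{+}v_j^{-}\in E(B)$ exactly when $v_iv_j\in E(G)$. A perfect matching of $B$ is precisely a permutation $\pi$ of $\{1,\dots,n\}$ with $v_iv_{\pi(i)}\in E(G)$ for all $i$; since $G$ has no loops, $\pi$ is fixed-point-free, and writing $\pi$ as a product of disjoint cycles turns every $2$-cycle into a copy of $K_2$ in $G$ and every $\ell$-cycle with $\ell\ge 3$ into a copy of $C_\ell$, these being vertex-disjoint and spanning. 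Hence \textbf{$G$ has a $\{K_2,\{C_k\}\}$-factor if and only if $B$ has a perfect matching}, and the whole task becomes matching this up with the condition $i(G-S)\le|S|$.

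For the necessity of $i(G-S)\le|S|$, let $F$ be a $\{K_2,\{C_k\}\}$-factor, fix $S\subseteq V(G)$, and set $I=I(G-S)$. Every $v\in I$ lies outside $S$ and has all of its $G$-neighbours, hence all of its $F$-neighbours, inside $S$; in particular no two vertices of $I$ are $F$-adjacent. Thus in any component $P$ of $F$: if $P\cong K_2$, then $P$ can contain at most one vertex of $I$, and if it does its other vertex lies in $S$; if $P\cong C_\ell$, then mapping each vertex of $I\cap V(P)$ to its cyclic successor --- which is a neighbour, hence in $S$ --- embeds $I\cap V(P)$ into $S\cap V(P)$. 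Either way $|I\cap V(P)|\le|S\cap V(P)|$, and summing over the components of $F$ yields $i(G-S)=|I|\le|S|$.

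For the sufficiency I would argue contrapositively. If $G$ has no $\{K_2,\{C_k\}\}$-factor then $B$ has no perfect matching, so by K\"onig's theorem $B$ has a vertex cover $A\cup C$ with $A\subseteq V^{+}$, $C\subseteq V^{-}$ and $|A|+|C|<n$. Let $A',C'\subseteq V(G)$ be the corresponding vertex sets and put $T=A'\cap C'$. Since each edge $v_iv_j$ of $G$ gives rise to both edges $v_i^{+}v_j^{-}$ and $v_j^{+}v_i^{-}$ of $B$, the covering condition forces every vertex $v\notin A'\cup C'$ to satisfy $N_G(v)\subseteq A'\cap C'=T$. Hence every vertex of $D_0:=V(G)\setminus(A'\cup C')$ is isolated in $G-T$, so that
\[
i(G-T)\ \ge\ |D_0|\ =\ n-|A'\cup C'|\ =\ n-|A'|-|C'|+|T|\ >\ |T|,
\]
contradicting $i(G-S)\le|S|$ for $S=T$. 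Therefore $B$ has a perfect matching and $G$ has the required factor.

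The conceptual core is the reduction through $B$ together with the observation that it is K\"onig's theorem, not merely Hall's, that produces the set $T=A'\cap C'$ witnessing the failure of Tutte's condition: a naive attempt to use $N_G(X)$ for a Hall-violating set $X$ does not give a large enough isolated set. The only computational point needing a little care is the per-component inequality $|I\cap V(P)|\le|S\cap V(P)|$ in the necessity direction, where the cyclic components must be handled via the successor injection; everything else is bookkeeping once $B$ has been introduced.
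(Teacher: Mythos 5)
The paper does not prove this lemma at all: it is quoted as a known result with citations to Tutte and to Lou, Liu and Ao, so there is no in-paper argument to compare yours against. Your reconstruction is correct and complete. The reduction to the bipartite double cover $B$ is sound: a perfect matching of $B$ is a fixed-point-free permutation $\pi$ with $v_i v_{\pi(i)}\in E(G)$, and its cycle decomposition yields exactly the vertex-disjoint $K_2$'s (from $2$-cycles) and cycles $C_\ell$, $\ell\ge 3$ (from longer cycles, whose consecutive edges are automatically distinct) that constitute a $\{K_2,\{C_k\}\}$-factor, and conversely. The necessity direction is handled correctly component by component; the only point needing care, the injection of $I\cap V(P)$ into $S\cap V(P)$ for cycle components via the cyclic successor, is stated and justified. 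The sufficiency direction correctly invokes K\"onig's theorem rather than Hall's, and the computation $i(G-T)\ge n-|A|-|C|+|T|>|T|$ with $T=A'\cap C'$ is exactly what is needed; your remark that a naive Hall-type argument on $N_G(X)$ would not produce a large enough isolated set is accurate. This is essentially the classical proof of Tutte's $1952$ theorem, so it is also the ``right'' argument for the cited result.
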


	\section{A Sufficient Condition on Distance Signless Laplacian Spectral Radius to Guarantee  $\mu_f(G)> \frac{n-k}{2}$ }
	
	For a graph $G$ of order $n$, we establish a tight upper bound on the distance signless Laplacian spectral radius to ensures $\mu_f(G)>\frac{n-k}{2}$ in a graph $G$, where $1\leq k<n$ is an integer.

	\begin{theorem}\label{the1}
		Let $G$ be a graph of order $n\geq 14k+24$, where $1\leq k <n$ is an integer. If $\eta(G)\leq \eta (K_1 \vee (K_{n-2-k}+\overline{K_{1+k}}))$, then $\mu_f(G)>\frac{n-k}{2}$ unless $ G = K_1 \vee (K_{n-2-k}+\overline{K_{1+k}})$.
	\end{theorem}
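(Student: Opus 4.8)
I would argue by contraposition, so assume $\mu_f(G)\le\frac{n-k}{2}$. By Lemma~\ref{lem4} there is a set $S\subseteq V(G)$ with $i(G-S)-|S|\ge k$; since $G$ is connected and $n\ge2$, we have $S\neq\emptyset$, so put $s=|S|\ge1$ and $i=i(G-S)\ge s+k$. Because $i\le n-s$, this also gives $1\le s\le\frac{n-k}{2}$. It suffices to show that then $\eta(G)>\eta\bigl(K_1\vee(K_{n-2-k}+\overline{K_{1+k}})\bigr)$ unless $G=K_1\vee(K_{n-2-k}+\overline{K_{1+k}})$.

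\emph{Step 1: reduction to a canonical graph.} In $G$ every vertex of $I(G-S)$ is adjacent only to vertices of $S$. Add to $G$ all missing edges inside $S$, all missing edges between $S$ and $V(G)\setminus S$, all missing edges inside $V(G)\setminus S\setminus I(G-S)$, and (if $i>s+k$) the edges that turn $i-(s+k)$ chosen vertices of $I(G-S)$ into vertices of the non-isolated part; one checks that $G$ is thereby a spanning subgraph of
\[G_s^{*}:=K_s\vee\bigl(K_{\,n-2s-k}+\overline{K_{\,s+k}}\bigr).\]
By Corollary~\ref{cor1}, $\eta(G)\ge\eta(G_s^{*})$, and since $G$ is connected this is strict unless $G=G_s^{*}$, in which case necessarily $s=1$, $i=1+k$, and $G=K_1\vee(K_{n-2-k}+\overline{K_{1+k}})$. (If $n-2s-k=0$ then $G_s^{*}=K_s\vee\overline{K_{n-s}}$ and everything below goes through with a $2\times2$ quotient matrix and is easier; assume $n-2s-k\ge1$.) Hence it remains to show that $\eta(G_s^{*})>\eta(G_1^{*})$ for every $2\le s\le\frac{n-k}{2}$.

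\emph{Step 2: the eigenvalue comparison.} The partition $\bigl(V(K_s),V(K_{n-2s-k}),V(\overline{K_{s+k}})\bigr)$ is equitable, the three vertex transmissions are $n-1$, $n-1+s+k$ and $2n-s-2$, and all relevant pairwise distances are $1$ or $2$; so by Lemma~\ref{lem3}, $\eta(G_s^{*})$ equals the largest eigenvalue of
\[B_s=\begin{pmatrix}n+s-2 & n-2s-k & s+k\\ s & 2n-s-2 & 2s+2k\\ s & 2n-4s-2k & 2n+s+2k-4\end{pmatrix}.\]
Let $\rho=\eta(G_1^{*})$, the largest root of $f_1(x):=\det(xI-B_1)$, and set $f_s(x)=\det(xI-B_s)$. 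Since each $f_s$ is monic of degree $3$, it is enough to prove $f_s(\rho)<0$: this alone forces $\rho$ to lie strictly below the largest root of $f_s$, i.e. $\rho<\eta(B_s)=\eta(G_s^{*})$. As $f_1(\rho)=0$, the problem reduces to the single inequality $f_s(\rho)-f_1(\rho)<0$, in which $f_s-f_1$ is a polynomial in $x$ of degree at most $2$ with explicit coefficients in $n,s,k$. Evaluating it at $x=\rho$ needs a sufficiently precise two-sided estimate of $\rho$, obtained from $f_1(\rho)=0$ together with the fact that $\rho$ is essentially governed by the $\{K_{n-2-k},\overline{K_{1+k}}\}$-block of $B_1$, giving $2n+2k-3\le\rho$ and $\rho$ within $O(\sqrt{kn})$ of $2n$. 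After simplification the dominant term of $f_s(\rho)-f_1(\rho)$ is a negative multiple of $(s-1)n^{2}$, and the hypothesis $n\ge14k+24$ makes it outweigh all lower-order terms for every admissible $s\ge2$; therefore $f_s(\rho)<0$ and $\eta(G_s^{*})>\rho$.

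\emph{Conclusion and main obstacle.} Combining the steps: if $\mu_f(G)\le\frac{n-k}{2}$ and $G\neq K_1\vee(K_{n-2-k}+\overline{K_{1+k}})$, then either $s=1$ and $G$ is a proper spanning subgraph of $G_1^{*}$ (so $\eta(G)>\eta(G_1^{*})$ by Step~1), or $s\ge2$ and $\eta(G)\ge\eta(G_s^{*})>\eta(G_1^{*})$ by Step~2; in either case $\eta(G)>\eta\bigl(K_1\vee(K_{n-2-k}+\overline{K_{1+k}})\bigr)$, contradicting the hypothesis. Sharpness is clear since $\mu_f\bigl(K_1\vee(K_{n-2-k}+\overline{K_{1+k}})\bigr)=\frac{n-k}{2}$. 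The main obstacle is Step~2: everything hinges on showing $f_s(\rho)-f_1(\rho)<0$ uniformly over $2\le s\le\frac{n-k}{2}$, and this requires estimating $\rho$ accurately enough, yet in a form clean enough to keep the subsequent algebra tractable — it is precisely the control of $\rho$ and the size of the error terms that dictate the threshold $n\ge14k+24$.
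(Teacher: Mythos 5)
Your plan follows essentially the same route as the paper's proof: reduce to the spanning supergraph $G_s^{*}=K_s\vee(K_{n-2s-k}+\overline{K_{s+k}})$ via Lemma~\ref{lem4} and Corollary~\ref{cor1}, pass to the equitable quotient matrix (your $B_s$ is a reordering of the paper's $M_s$), and conclude by showing $f_s(\rho)-f_1(\rho)<0$ at $\rho=\eta(K_1\vee(K_{n-2-k}+\overline{K_{1+k}}))$, which is exactly the paper's Cases 1--3. The only points worth noting are that the paper carries out your Step~2 with just the one-sided bound $\rho\ge\eta(K_n)=2n-2$ --- it factors $f_s-\widetilde f=(1-s)\widehat{g}_1$ and checks that $\widehat{g}_1$ is nonnegative and increasing on $[2n-2,+\infty)$, which is precisely where $n\ge 14k+24$ enters --- so no two-sided $O(\sqrt{kn})$ estimate of $\rho$ is required, and that the degenerate case $n-2s-k=0$ is indeed disposed of by a separate $2\times 2$ quotient computation, as you anticipate.
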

	
	\begin{proof}
		Let $G$ be a graph of order $n\geq 14k+24$, $\eta(G)\leq \eta (K_1 \vee(K_{n-2-k}+\overline{K_{1+k}}))$ and $ G \not= K_1 \vee (K_{n-2-k}+\overline{K_{1+k}})$, where $1\leq k <n$ is an integer.
		Assume to the contrary that $\mu_f(G)\leq \frac{n-k}{2}$.
		By Lemma \ref{lem4}, there exists a subset $S \subseteq V(G)$ so that  $i(G-S)-|S|\geq k$. Set $|S|=s$ and $n_1=n-s-i(G-S)$. Then $i(G-S)\geq s+k$ and $s\leq \frac{n-k}{2}$.
		
		If {$n_1\geq 2$}, then $V(G)$ can be partitioned into three disjoint subsets: $S$, $I(G-S)$ and $V(G)-S-I(G-S)$. Then we obtain that $G $ is a spanning subgraph of $K_s \vee (K_{n_1}+\overline{K_{i(G-S)}})$. If $i(G-S)> s+k$, then let $\widetilde{G}$ be the graph obtained from $G$ by adding all edges between $V(K_{n_1})$ and a vertex in $I(G-S)$, that is, $\widetilde{G}=K_s \vee (K_{n_1+1}+\overline{K_{i(G-S)-1}})$.
		Then $\eta(\widetilde{G})\leq \eta(G)$ and $i(\widetilde{G}-S)\geq s+k$. Repeat the above step until $i(G-S)= s+k$. Set $G_s=K_s \vee (K_{n-2s-k}+\overline{K_{s+k}})$.
		
		If {$n_1 =1$}, then  $V(K_{n_1})$ is a  isolated vertex after  deleting $S$. Therefore, $V(G)$ can be divided into $S$ and $I(G-S)$, it can be reduced to the case below.
		
		If {$n_1 =0$}, $V(G)$ can be divided into $S$ and $I(G-S)$, then $G$ is a spanning subgraph of $K_{n-i(G-S)}\vee \overline{K_{i(G-S)}}$. Similarly, we can add all edges between a vertex $v\in I(G-S)$ and $I(G-S)\setminus \{v\}$ when $i(G-S)> s+k+1$.
		Repeat the above step until $i(G'-S)= s+k $ or $s+k+1$, then $G'= K_\frac{n-k}{2} \vee \overline{K_\frac{n+k}{2}}=G_s$ where $s=\frac{n-k}{2}$ or $G'=K_{\frac{n-k-1}{2}} \vee \overline{K_{\frac{n+k+1}{2}}}= K_{\frac{n-k-1}{2}} \vee (K_1+\overline{K_{\frac{n+k-1}{2}}})=G_s$, where $s=\frac{n-k-1}{2}$. Hence $G$ is also a
		spanning subgraph of $G_s=K_s \vee (K_{n-2s-k}+\overline{K_{s+k}})$ with $s=\frac{n-k}{2}$ or $\frac{n-k-1}{2}$.

		Since $G$ is a spanning subgraph of $G_s=K_s \vee (K_{n-2s-k}+\overline{K_{s+k}})$ where $1\leq s \leq \frac{n-k}{2}$, we consider the following three cases.
		\par \noindent
		{\textbf{Case 1.}} $s=1$.
		
		In this case, we have $G_s = K_1 \vee (K_{n-2-k}+\overline{K_{1+k}})$.
		Since $\eta(K_1 \vee (K_{n-2-k}+\overline{K_{1+k}}))=\eta(G_s) \leq \eta(G) \leq \eta(K_1 \vee (K_{n-2-k}+\overline{K_{1+k}}))$, we have $G = K_1 \vee (K_{n-2-k}+\overline{K_{1+k}})$ and $\mu_f(G)=\frac{n-k}{2}$ which contradicts $G\not  = K_1 \vee (K_{n-2-k}+\overline{K_{1+k}})$.

		\par \noindent{\textbf{Case 2.}} $1 < s<\frac{n-k}{2}$.
		
		In this case, we have $G_s=K_s \vee (K_{n-2s-k}+\overline{K_{s+k}})$ and the partition $V(G_s) = V(K_{n-2s-k}) \cup V({K_s})\cup V(\overline{K_{s+k}})$. The equitable quotient matrix of $Q(G_s)$ is
		
		\[M_s=\begin{pmatrix}
			2n-s-2&s&2(s+k)\\
			n-2s-k&n+s-2&s+k\\
			2(n-2s-k)&s&2n+s+2k-4
		\end{pmatrix}.\]
		It is clear that the characteristic polynomial of $M_s$ is
		\begin{align*}
			f_s(x)=&x^3+(8-5n-s-2k)x^2+(4k^2+2kn+12ks-8k+8n^2-ns-26n\\&+8s^2-4s+20)x-4k^2n-2k^2s+8k^2-12kns+4kn-4ks^2+26ks\\&-8k-4n^3+2n^2s+20n^2-8ns^2-2ns-32n-2s^3-18s^2-4s+16.
		\end{align*}
		Note that the characteristic polynomial of the equitable quotient matrix of $Q(K_1 \vee (K_{n-2-k}+\overline{K_{1+k}}))$ is
		\begin{align*}
			\widetilde{f}(x)=&x^3+(7-5n-2k)x^2+(4k^2+2kn+4k+8n^2-27n++24)x\\&-2k^2(2n+1)+8k^2-8kn-14k-4n^3+22n^2-42n-8.
		\end{align*}
		Let $\theta_{s}$ and $\widetilde{\theta}$ be the largest eigenvalues of $f_s(x)=0$ and $\widetilde{f}(x)=0$, respectively. By Lemma \ref{lem3}, we obtain $\eta(K_s \vee (K_{n-2s-k}+\overline{K_{s+k}}))=\theta_{s}$ and $\eta(K_1 \vee (K_{n-2-k}+\overline{K_{1+k}}))=\widetilde{\theta}$.
		
		Let $g_1(x)=f_s(x)-\widetilde{f}(x)$. Then
		\begin{align*}
			g_1(x)=&(1-s)[x^2+(n-12k-8s-4)x+2s^2+4(s+1)k+8n(s+1)\\&-16(1+s)+2k^2+12kn-26k-2n^2+2n+4].
		\end{align*}
		Notice that $\eta(G_s)\geq \eta(K_n)=2n-2$. Then we claim that
		$g_1(x)<0$ for $x \in [ 2n-2, +\infty )$.
		
		Let $g_1(x)=(1-s) \widehat{g_1}(x)$, where
		\begin{align*}
			\widehat{g}_1(x)=&x^2+(n-12k-8s-4)x+2s^2+4(s+1)k+8n(s+1)\\&-16(1+s)+2k^2+12kn-26k-2n^2+2n+4.
		\end{align*}
		As $n\geq 14k+24$ and $1< s<\frac{n-k}{2}$, we have $2n-2-(-\frac{n-12k-8s-4}{2})=\frac{5n}{2}-6k-6-4s \geq \frac{n}{2}-4k-6\geq 0$, which implies that  $\widehat{g_1}(x)$ is increasing on the interval $ [ 2n-2, +\infty )$. Let
		\begin{align*}
			h_1(s)=\widehat{g_1}(2n-2)&=2s^2+(4k-8n)s+10n-22k-18+12kn-(2n-2)(12k-n+4)\\&+(2n-2)^2+2k^2-2n^2+6. \end{align*}
		Since
		\begin{align*}
			\frac{dh_1(s)}{ds}&=4k-8n+4s \leq 2k-6n<0,
		\end{align*}
		we have $h_1(s)$ is monotonically decreasing for  $s$, and thus
		\begin{align*}
			h_1(s) \geq 	h_1(\frac{n-k}{2})&=\frac{n^2}{2}-n(7k+8)+\frac{k^2}{2}+2k\\&=n(\frac{n}{2}-7k-8)+\frac{k^2}{2}+2k>0.
		\end{align*}
		Hence, $\widehat{g_1}(2n-2)\geq 0$ and $\widehat{g_1}(x)$ is increasing with respecct to $x\in  [ 2n-2, +\infty )$, which implies that $g_1(x)=(1-s) \widehat{g_1}(x)<0$ when $x\in  [ 2n-2, +\infty )$,  as claimed. Since $g_1(x)<0$ for $x \in [ 2n-2, +\infty )$, we have $f_s(\widetilde{\theta})=f_s(\widetilde{\theta})-\widetilde{f}(\widetilde{\theta})=g_1(\widetilde{\theta})<0$, which implies that $\widetilde{\theta}<\theta_{s}$. This  contradicts the assumption $\eta(G_s)\leq \eta(G) \leq \eta (K_1 \vee (K_{n-2-k}+\overline{K_{1+k}}))$.
		
		\par\noindent{\textbf{Case 3.}} $s=\frac{n-k}{2}$.
		
		In this case, we have $G_\frac{n-k}{2}=K_\frac{n-k}{2} \vee \overline{K_\frac{n+k}{2}}$, and we have the partition $V(G) = V(K_\frac{n-k}{2}) \cup V(\overline{K_\frac{n+k}{2}})$. The equitable quotient matrix of $Q(G_\frac{n-k}{2})$ is
		\[\widehat{M}=\begin{pmatrix}
			\frac{3n-k}{2}-2&\frac{n+k}{2}\\
			\frac{n-k}{2}&\frac{5n+3k}{2}-4\\
		\end{pmatrix}.\]
		It is clear that the characteristic polynomial of $\widehat{f}(x)$ is
		\begin{align*}
			\widehat{f}(x)=x^2+(6-k-4n)x+\frac{7}{2}n^2-\frac{1}{2}k^2+kn-11n-k+8.
		\end{align*}
		In fact,
		\begin{align*}
			\widehat{f}(x)&=(x-\frac{3n-k}{2}+2)(x-\frac{5n+3k}{2}+4)-(\frac{n+k}{2})(\frac{n-k}{2})\\&<(x-\frac{3n-k}{2}+2)(x-\frac{5n+3k}{2}+4).
		\end{align*}
		Then the largest root of $\widehat{f}(x)$ is greater than $\frac{5n+3k}{2}-4$. We have
		
		\begin{align*} \widetilde{f}(\frac{5n+3k}{2}-4)&=\frac{3}{8}n^3-(\frac{39}{8}k+\frac{35}{4})n^2+(-\frac{3}{8}k^2+16k+26)n+\frac{39}{8}k^3+\frac{35}{4}k^2-10k-20
			\\
			&\geq
			\frac{3}{8}n^3-(\frac{39}{8}k+\frac{35}{4}+\frac{3}{8}k)n^2+(16k+26)n+\frac{39}{8}k^3+\frac{35}{4}k^2-10k-20
			\\
			&=n^2(\frac{3}{8}n-\frac{42}{8}k+\frac{35}{4})+\frac{39}{8}k^3+\frac{35}{4}k^2+(16n-10)k+(26n-20)>0.\\
			\widetilde{f}'(x)&=3x^2-2(5n+2k-7)x+8n^2+4k^2+2kn-27n+4k.
		\end{align*}
		$\widetilde{f}'(x)$ is a quadratic function opening upward. Its axis of symmetry is $x=\frac{5n+2k-7}{3}$.
		\begin{align*}
			\widetilde{f}'(\frac{5n+3k}{2}-4)=\frac{7}{4}n^2-(\frac{k}{2}+12)n+\frac{19}{4}k^2+5k+16>0.
		\end{align*}
		Since $\frac{5n+3k}{2}-4>\frac{5n+2k-7}{3}$, we have $ \widetilde{f}'(x)>0 $ for $x\in  [ \frac{5n+3k}{2}-4, +\infty )$. $ \widetilde{f}(x)$  is increasing with respect to $x\in  [ \frac{5n+3k}{2}-4, +\infty )$.
		\[
		\begin{cases}
			\widetilde{f}(\frac{5n+3k}{2}-4) &\geq 0. \\
			\widetilde{f}'(x) &\geq 0,  \qquad x\in  [ \frac{5n+3k}{2}-4, +\infty ).
		\end{cases}
		\]
		So $\widetilde{\theta} < \frac{5n+3k}{2}-4< \widehat{\theta}$, where $\widehat{\theta}$ is the largest root of $\widehat{f}(x)$.  It  contradicts the assumption $\eta(G_s)\leq  \eta(G)\leq \eta (K_1 \vee (K_{n-2-k}+\overline{K_{1+k}}))$.  
		
		This completes the proof.
	\end{proof}
	
	
	\section{A Sufficient Condition on Distance Signless Laplacian Spectral Radius for the Existence of a $\{K_2,\{C_k\}\}$-Factor}

	For a graph $G$ of order $n$, we establish a tight sufficient condition in terms of the distance signless Laplacian spectral radius for the existence of a $\{K_2,\{C_k\}\}$-factor in a graph, where $k \geq 3$ is an integer.

	\begin{theorem}\label{the2}
		Let $G$ be a graph of order $n \geq 16$ and $k \geq 3$ be a positive integer. If $\eta(G)\leq \eta (K_1 \vee (K_{n-3}+\overline{K_2}))$, then $G$  has a $\{K_2,\{C_k\}\}$-factor unless $ G = K_1 \vee (K_{n-3}+\overline{K_2})$. The conclusion still holds for $n = 12$ and $n = 14$.

		If $3 \leq n \leq 11$, $n=13$ or $n=15$, $\eta(G)\leq \eta (\widehat{G})$, then $G$  has a $\{K_2,\{C_k\}\}$-factor unless $G = \widehat{G}$, where $\widehat{G}=K_\frac{n-1}{2} \vee (\overline{K_\frac{n+1}{2}})$ for odd $n$ and  $\widehat{G}=K_{\frac{n}{2}-1} \vee (\overline{K_{\frac{n}{2}+1}})$ for even $n$.
		
	\end{theorem}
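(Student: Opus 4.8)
The plan is to run the same scheme as in the proof of Theorem \ref{the1}, with Lemma \ref{lem5} in place of Lemma \ref{lem4} as the combinatorial input, and noting that the reference graph no longer depends on $k$. Suppose, for contradiction, that $G$ has no $\{K_2,\{C_k\}\}$-factor. By Lemma \ref{lem5} there is a set $S\subseteq V(G)$ with $i(G-S)\geq|S|+1$; put $s=|S|$ and $n_1=n-s-i(G-S)$, so $i(G-S)\geq s+1$ and hence $s\leq\frac{n-1}{2}$.

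First I would reduce $G$ to a concrete spanning supergraph. If $n_1\geq 2$, then $G$ is a spanning subgraph of $K_s\vee(K_{n_1}+\overline{K_{i(G-S)}})$; by repeatedly joining a vertex of $I(G-S)$ to the clique part, which only decreases $\eta$ by Corollary \ref{cor1}, I may assume $i(G-S)=s+1$, so $G$ is a spanning subgraph of $G_s:=K_s\vee(K_{n-2s-1}+\overline{K_{s+1}})$. Exactly as in Theorem \ref{the1}, the cases $n_1=1$ and $n_1=0$ collapse into the situation $s=\lfloor\frac{n-1}{2}\rfloor$, where $G_s$ is precisely the graph $\widehat G$ of the statement (the two parities of $n$ giving $K_{(n-1)/2}\vee\overline{K_{(n+1)/2}}$ and $K_{n/2-1}\vee\overline{K_{n/2+1}}$, since then $K_{n-2s-1}$ is $K_0$ or $K_1$). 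Thus in every case $\eta(G)\geq\eta(G_s)$ for some $1\leq s\leq\lfloor\frac{n-1}{2}\rfloor$, and it remains to decide which $s$ maximises $\eta(G_s)$.

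Next I would split on $s$. If $s=1$, then $G_1=K_1\vee(K_{n-3}+\overline{K_2})$ and $\eta(G_1)\leq\eta(G)\leq\eta(K_1\vee(K_{n-3}+\overline{K_2}))=\eta(G_1)$, forcing $G=K_1\vee(K_{n-3}+\overline{K_2})$, the excluded graph. If $1<s<\lfloor\frac{n-1}{2}\rfloor$, I would write the $3\times3$ equitable quotient matrix of $Q(G_s)$ for the partition $V(K_{n-2s-1})\cup V(K_s)\cup V(\overline{K_{s+1}})$, use Lemma \ref{lem3} to identify $\eta(G_s)$ with its largest root $\theta_s$ and $\eta(K_1\vee(K_{n-3}+\overline{K_2}))$ with the largest root $\widetilde\theta$ of the analogous polynomial $\widetilde f$, and study $g(x)=f_s(x)-\widetilde f(x)$. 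As in Theorem \ref{the1} the cubic terms cancel and $g$ factors as $(1-s)$ times a quadratic; using $\eta(G_s)\geq\eta(K_n)=2n-2$ and $\widetilde\theta\geq 2n-2$ (both graphs being spanning subgraphs of $K_n$), the monotonicity of that quadratic on $[2n-2,+\infty)$, and its value at $x=2n-2$ (which, as a function of $s$, is decreasing and still positive at $s=\frac{n-1}{2}$ once $n\geq 16$), one gets $g(x)<0$ on $[2n-2,+\infty)$, hence $f_s(\widetilde\theta)=g(\widetilde\theta)<0$, so $\widetilde\theta<\theta_s$, contradicting $\eta(G_s)\leq\eta(G)\leq\widetilde\theta$. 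Finally, for $s=\lfloor\frac{n-1}{2}\rfloor$, $G_s=\widehat G$, and comparing the largest root of its $2\times2$ quotient polynomial $\widehat f$ (which exceeds its larger diagonal entry) with $\widetilde\theta$ — by showing $\widetilde f$ is positive and increasing past that diagonal entry — yields $\widetilde\theta<\eta(\widehat G)$ for all $n\geq 16$ and also for $n=12,14$, again a contradiction.

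The main obstacle, and the source of the awkward list of small-order exceptions, is exactly this last comparison in Case 3. For small $n$ the inequality $\widetilde\theta<\eta(\widehat G)$ reverses, so $\widehat G$ rather than $K_1\vee(K_{n-3}+\overline{K_2})$ is the genuine extremal graph; and since the form of $\widehat G$ depends on the parity of $n$, the even-$n$ and odd-$n$ crossovers occur at different places, which is why $n=12,14$ stay in the large-$n$ regime while $n=13,15$ (and $3\leq n\leq 11$) must be treated with $\widehat G$ as the reference. For those residual orders I would rerun the same three-case scheme with $\widehat G$ in place of $K_1\vee(K_{n-3}+\overline{K_2})$; the only real work is a finite battery of polynomial sign checks, each reducible to the sign of a low-degree polynomial in $s$ over a bounded range of $n$, together with a handful of tiny cases where some $G_s$ degenerates and must be inspected by hand.
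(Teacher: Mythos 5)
Your architecture is the same as the paper's: Lemma \ref{lem5} in place of Lemma \ref{lem4}, reduction to spanning subgraphs of $G_s=K_s\vee(K_{n-2s-1}+\overline{K_{s+1}})$, and the three-case comparison of largest roots of quotient-matrix characteristic polynomials. However, there is a genuine quantitative gap in where you claim the analytic argument closes. You assert that the Case 2 value $\widehat{g}_1(2n-2)$ ``is decreasing in $s$ and still positive at $s=\frac{n-1}{2}$ once $n\geq 16$,'' and that Case 3 yields $\widetilde\theta<\eta(\widehat G)$ ``for all $n\geq 16$ and also for $n=12,14$.'' Neither holds for the estimates you describe. Carrying out your own computation, $h_1(\frac{n-1}{2})=\frac{n^2}{2}-15n+\frac{5}{2}$, which is negative for $16\leq n\leq 29$; and in Case 3 the key quantity $\widetilde f(\frac{5n-5}{2})=\frac{3}{8}n^3-\frac{109}{8}n^2+\frac{333}{8}n-\frac{131}{8}$ is not positive until roughly $n\geq 34$ (the paper's clean sufficient bound is $3n>109$, i.e.\ $n\geq 37$). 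So the monotonicity-plus-sign argument only works for large $n$; the paper runs it for $n\geq 37$.

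The consequence is that your plan leaves $n=12$, $n=14$ and $16\leq n\leq 36$ unproven: you relegate the ``finite battery of polynomial sign checks'' only to the residual orders $3\leq n\leq 11$, $13$, $15$ where the extremal graph switches to $\widehat G$, whereas in fact the entire range $4\leq n\leq 36$ must be settled by direct computation of $\eta(G_s)$ for every admissible $s$ (the paper does exactly this in Table \ref{example}, which is also how one discovers that the minimizer is $G_1$ for $n\in\{12,14\}\cup\{16,\dots,36\}$ but $\widehat G$ for the other small orders). The fix is routine but nontrivial in volume: either sharpen the inequalities or extend the explicit verification to all $n\leq 36$. As written, the proof is incomplete precisely on the orders $12$, $14$, and $16$ through $36$ that the first half of the theorem is about.
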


	\begin{proof}
		The case for $n=3$ is straightforward to analyze, our analysis primarily focuses on the two scenarios where
	$4\leq n\leq 36$ and ${n\geq 37}$.
		 When $n=3$,
		$\widehat{G}=K_1 \vee \overline{K_2}=P_3$,
		the only $G$ satisfying $\eta(G)\leq \eta (\widehat{G})$ and $G\not = \widehat{G}$ must be $C_3$, which has a $\{K_2,\{C_k\}\}$-factor.
		
		Let's start with ${n\geq 37}$. Let $G$ be a graph of order $n\geq 37$,  $\eta(G)\leq \eta (K_1 \vee (K_{n-3}+\overline{K_{2}}))$ and $G \not = K_1 \vee (K_{n-3}+\overline{K_{2}})$. Assume to the contrary that $G$  doesn't have a $\{K_2,\{C_k\}\}$-factor. By Lemma \ref{lem5}, there exists $S \subseteq V(G)$ so that  $i(G-S)-|S|>0$. Set $|S|=s$, then $i(G-S)\geq s+1$. 
		Hence, $G$ must be a spanning subgraph of one of the following three graphs: $G_s=K_s \vee (K_{n-2s-1}+\overline{K_{s+1}})$ for $1 \leq s < \frac{n-1}{2}$; $\widehat{G}=K_{\frac{n-1}{2}} \vee \overline{K_{\frac{n+1}{2}}}$ for odd $n$; $\widehat{G}=K_{\frac{n}{2}-1} \vee \overline{K_{\frac{n}{2}+1}}$ for even $n$. Note that $K_{\frac{n}{2}-1} \vee \overline{K_{\frac{n}{2}+1}}= K_{\frac{n}{2}-1} \vee (K_1+\overline{K_{\frac{n}{2}}})=G_{\frac{n}{2}-1}$. Therefore, we only need to discuss the first two graphs $G_s=K_s \vee (K_{n-2s-1}+\overline{K_{s+1}})$ for $1\leq s<\frac{n-1}{2}$ and $G_{\frac{n-1}{2}}=K_{\frac{n-1}{2}} \vee \overline{K_{\frac{n+1}{2}}}$ for odd $n$.
		
		\par\noindent{\textbf{Case 1.}} $s=1$.
		
		In this case, we have $G_s = K_1 \vee (K_{n-3}+\overline{K_{2}})$.
		Since $\eta(K_1 \vee (K_{n-3}+\overline{K_{2}}))=\eta(G_s) \leq \eta(G) \leq \eta(K_1 \vee (K_{n-3}+\overline{K_{2}}))$, we have $G = K_1 \vee (K_{n-3}+\overline{K_{2}})$ and $G$  has a $\{K_2,\{C_k\}\}$-factor, which contradicts that $G\not  = K_1 \vee (K_{n-3}+\overline{K_{2}})$.
		\par\noindent{\textbf{Case 2.}} $1< s<\frac{n-1}{2}$.
		
		In this case, we have $G_s=K_s \vee (K_{n-2s-1}+\overline{K_{s+1}})$ and the partition $V(G_s) = V(K_{n-2s-1}) \cup V({K_s})\cup V(\overline{K_{s+1}})$. The equitable quotient matrix of $Q(G_s)$ is
		\[M_s=\begin{pmatrix}
			2n-s-2&s&2(s+1)\\
			n-2s-1&n+s-2&s+1\\
			2(n-2s-1)&s&2n+s-2
		\end{pmatrix}.\]
		It is clear that the characteristic polynomial of $M_s$ is
		\begin{align}
			f_s(x)=&x^3+(6-5n-s)x^2+(8n^2-ns-24n+8s^2+8s+16)+20s- \label{eq1}\\
			&32n-14ns-8ns^2+2ns^2+20n^2-4n^3+14s^2-2s^3+16.\notag
		\end{align}
		Note that the characteristic polynomial of the equitable quotient matrix of $Q(K_1 \vee (K_{n-3}+\overline{K_{2}}))$ is
		\begin{align}
			\widetilde{f}(x)=&x^3+(5-5n)x^2+(8n^2-25n+32)x-4n^3+22n^2-52n+48.\label{eq2}
		\end{align}
		Let $\theta_{s}$ and $\widetilde{\theta}$ be the largest eigenvalues of $f_s(x)=0$ and $\widetilde{f}(x)=0$, respectively. By Lemma \ref{lem3}, we obtain $\eta(K_s \vee (K_{n-2s-k}+\overline{K_{s+k}}))=\theta_{s}$ and $\eta(K_1 \vee (K_{n-2-k}+\overline{K_{1+k}}))=\widetilde{\theta}$.
		
		Let $g_1(x)=f_s(x)-\widetilde{f}(x)$. Then $g_1(x)=(1-s)[x^2+(n-8s-16)x-2n^2+8ns+22n+2s^2-12s-32$.  Notice that $\eta(G_s)\geq \eta(K_n)=2n-2$. Then we claim that	$g_1(x)<0$ for $x \in [ 2n-2, +\infty )$.
		
		Let $g_1(x)=(1-s) \widehat{g_1}(x)$, where
		$\widehat{g}_1(x)=x^2+(n-8s-16)x-2n^2+8ns+22n+2s^2-12s-32$.
		As $n\geq 37$ and $1< s<\frac{n-1}{2}$, we have $2n-2-(-\frac{n-8s-16}{2})=\frac{5n}{2}-12-4s \geq \frac{n}{2}-10\geq 0$, which implies that  $\widehat{g_1}(x)$ is increasing on the interval $ [ 2n-2, +\infty )$. Let $h_1(s)=\widehat{g_1}(2n-2)=2s^2+(4-8n)s+22+(2n-2)(n-16)+(2n-2)^2-2n^2-32$.
		
		Since $\frac{dh_1(s)}{ds}=4-8n+4s \leq 2-6n<0$. We have $h_1(s)$ is monotonically decreasing for  $s$, and thus
		$h_1(\frac{n-1}{2})=\frac{n^2}{2}-15n+\frac{5}{2}=(\frac{n}{2}-15)n+\frac{5}{2}>0$.
		
		Hence, $\widehat{g_1}(2n-2)\geq 0$ and $\widehat{g_1}(x)$ is increasing with respecct to $x\in  [ 2n-2, +\infty )$, which implies that $g_1(x)=(1-s) \widehat{g_1}(x)<0$ when $x\in  [ 2n-2, +\infty )$,  as claimed. Since $g_1(x)<0$ for $x \in [ 2n-2, +\infty )$, we have $f_s(\widetilde{\theta})=f_s(\widetilde{\theta})-\widetilde{f}(\widetilde{\theta})=g_1(\widetilde{\theta})<0$, which implies that $\widetilde{\theta}<\theta_{s}$. This  contradicts the assumption $\eta(G_s)\leq \eta(G) \leq \eta (K_1 \vee (K_{n-2-k}+\overline{K_{1+k}}))$.
		
		\par\noindent{\textbf{Case 3.}} $s=\frac{n-1}{2}$.
		
		In this case, we have $G_\frac{n-1}{2}=K_\frac{n-1}{2} \vee \overline{K_\frac{n+1}{2}}$, and we have the partition $V(G) = V(K_\frac{n-1}{2}) \cup V(\overline{K_\frac{n+1}{2}})$. The equitable quotient matrix of $Q(G_\frac{n-1}{2})$ is
		\[\widehat{M}=\begin{pmatrix}
			\frac{3n-5}{2}&\frac{n+1}{2}\\
			\frac{n-1}{2}&\frac{5n-5}{2}\\
		\end{pmatrix}.\]
		It is clear that the characteristic polynomial of $\widehat{f}(x)$ is
		\begin{align}\label{eq3}
			\widehat{f}(x)=x^2+(5-4n)x+\frac{7}{2}n^2+\frac{13}{2}-10n.
		\end{align}
		In fact,
		\begin{align*}
			\widehat{f}(x)&=(x-\frac{3n-5}{2})(x-\frac{5n-5}{2})-(\frac{n+1}{2})(\frac{n-1}{2})\\&<(x-\frac{3n-5}{2})(x-\frac{5n-5}{2}).
		\end{align*}
		Then the largest root of $\widehat{f}(x)$ is larger than $\frac{5n-5}{2}$.  We have
		
		\begin{align*} \widetilde{f}(\frac{5n-5}{2})&=\frac{3}{8}n^3-\frac{109}{8}n^2+\frac{333}{8}n-\frac{131}{8}
			\\&=\frac{n^2}{8}(3n-109)+\frac{1}{8}(333n-131) >0 \quad (n\geq 37).
			\\
			\widetilde{f}'(x)&=3x^2-10(n-1)x+8n^2-25n+32.
		\end{align*}
		$\widetilde{f}'(x)$ is a quadratic function opening upward. Its axis of symmetry is $x=\frac{5n-5}{3}$.
		\begin{align*}
			\widetilde{f}'(\frac{5n-5}{2})&=\frac{7}{4}n^2-\frac{25}{2}n+\frac{103}{4}\\
			&=\frac{n}{4}(7n-50)+\frac{103}{4} >0
			\quad (n\geq 37).
		\end{align*}
		Since $\frac{5n-5}{2}>\frac{5n-5}{3}$, we have $ \widetilde{f}'(x)>0 $ for $x\in  [ \frac{5n-5}{2}, +\infty )$. $ \widetilde{f}(x)$  is increasing with respect to $x\in  [ \frac{5n-5}{2}, +\infty )$.
		\[
		\begin{cases}
			\widetilde{f}(\frac{5n-5}{2}) &\geq 0. \\
			\widetilde{f}'(x) &\geq 0,   \quad x\in  [ \frac{5n-5}{2}, +\infty ).
		\end{cases}
		\]
		So $\widetilde{\theta}< \frac{5n-5}{2}< \widehat{\theta}$, where $\widehat{\theta}$ is the largest root of $\widehat{f}(x)$. It contradicts the assumption $\eta(G_s) \leq\eta(G)\leq \eta (K_1 \vee (K_{n-3}+\overline{K_{2}}))$.
		
		\vspace{0.2cm} 
		
		Now we need to analyze the case $4\leq n\leq 36$ for $G_s=K_s \vee (K_{n-2s-1}+\overline{K_{s+1}})$. Let $\widehat{G}=K_\frac{n-1}{2} \vee \overline{K_\frac{n+1}{2}}$ for odd $n$ and  $\widehat{G}=K_{\frac{n}{2}-1} \vee \overline{K_{\frac{n}{2}+1}}$ for even $n$. In fact, $\widehat{G}=G_{\lfloor\frac{n-1}{2}\rfloor}$.

		When ${4\leq n\leq 36}$, we don't know how $\eta(G_s)$ changes with $s$, nor do we know which one is larger between $\eta(G_s)$ and  $\eta(\widehat{G})$. However, we can calculate their distance signless Laplacian spectral radius by \eqref{eq1}, \eqref{eq2} and \eqref{eq3}. Calculation results are presented in Table \ref{example} below.

		By comparing the above data, we can conclude that either $\eta(G) > \eta(K_1 \vee (K_{n-3}+\overline{K_2}))$ or $G=K_1 \vee (K_{n-3}+\overline{K_2})$ for $n=12$ , $n=14$ and $16\leq n\leq 36$. This  contradicts the assumption. When $4 \leq n \leq 11$, $n=13$ and $n=15$, either $\eta(G) >\eta (\widehat{G})$ or $G=\widehat{G}$, which also contradicts the assumption.
		Combining the cases of $n=3$ and $n \geq 37$, we have now completed the proof of the entire Theorem \ref{the2}.	
	\end{proof}

	\begin{sidewaystable}[htbp]
		
\tiny{
		\caption{Distance signless Laplacian spectral radius of $G_s=K_s \vee (K_{n-2s-1}+\overline{K_{s+1}})$}\label{example}
\begin{tabular}{|c|*{19}{|c}|}
			\toprule
			$\eta(\widehat{G})$ &\diagbox{$n$}{$\eta(G_s)$} & $\eta(G_1)$&$\eta(G_2)$&$\eta(G_3)$&$\eta(G_4)$&  $\eta(G_5)$ & $\eta(G_6)$	& $\eta(G_7)$ & $\eta(G_8)$ & $\eta(G_9)$ & $\eta(G_{10})$ & $\eta(G_{11})$ & $\eta(G_{12})$ & $\eta(G_{13})$ & $\eta(G_{14})$ & $\eta(G_{15})$ & $\eta(G_{16})$ & $\eta(G_{17})$ \\
			\midrule
			9.46 &$n=4$& 9.46 & & & & & & & & & & & & & & & &\\
			11 &$n=5$& 12.58 & 11 & & & & & & & & & & & & & & & \\
			14.90 &$n=6$& 15.46 & 14.90 & & & & & & & & & & & & & & &\\
			16.42 &$n=7$& 18.21 & 18.18 & 16.42 & & & & & & & & & & & & & &\\
			20.32 &$n=8$& 20.87 & 21.21 & 20.32 & & & & & & & & & & & & & &\\
			21.84 &$n=9$& 23.46 & 24.10 & 23.71 & 21.84 & & & & & & & & & & & & &\\
			25.75 &$n=10$& 26.01 & 26.89 & 26.85 & 25.75 & & & & & & & & & & & & &\\
			27.26 &$n=11$& 28.53 & 29.61 & 29.85 & 29.21 & 27.26 & & & & & & & & & & & & \\
			31.17 &$n=12$& 31.01 & 32.28 & 32.74 & 32.43 & 31.17 & & & & & & & & & & & & \\
			32.68 &$n=13$& 33.47 & 34.90 & 35.56 & 35.51 & 34.68 & 32.68 & & & & & & & & & & & \\
			36.58 &$n=14$& 35.90 & 37.49 & 38.32 & 38.49 & 37.98 & 36.58 & & & & & & & & & & & \\
			38.09 &$n=15$& 38.32 & 40.04 & 41.02 & 41.39 & 41.13 & 40.14 & 38.09 & & & & & & & & & & \\
			42 &$n=16$& 40.72 & 42.57 & 43.69 & 44.22 & 44.17 & 43.49 & 42 & & & & & & & & & & \\
			43.51 &$n=17$& 43.10 & 45.07 & 46.33 & 47 & 47.13 & 46.70 & 45.59 & 43.51 & & & & & & & & & \\
			47.42 &$n=18$& 45.47 & 47.56 & 48.93 & 49.74 & 50.03 & 49.80 & 48.99 & 47.42 & & & & & & & & & \\
			48.93 &$n=19$& 47.83 & 50.03 & 51.51 & 52.44 & 52.87 & 52.82 & 52.25 & 51.04 & 48.93 & & & & & & & & \\
			52.83 &$n=20$& 50.18 & 52.48 & 54.07 & 55.11 & 55.67 & 55.78 & 55.40 & 54.47 & 52.83 & & & & & & & & \\
			54.34 &$n=21$& 52.52 & 54.92 & 56.61 & 57.75 & 58.43 & 58.67 & 58.47 & 57.77 & 56.48 & 54.34 & & & & & & & \\
			58.25 &$n=22$& 54.84 & 57.34 & 59.12 & 60.37 & 61.16 & 61.53 & 61.47 & 60.97 & 59.95 & 58.25 & & & & & & & \\
			59.76 &$n=23$& 57.16 & 59.75 & 61.63 & 62.97 & 63.86 & 64.34 & 64.42 & 64.09 & 63.28 & 61.91 & 59.76 & & & & & & \\
			63.66 &$n=24$& 59.48 & 62.15 & 64.11 & 65.54 & 66.53 & 67.12 & 67.32 & 67.13 & 66.52 & 65.41 & 63.66 & & & & & & \\
			65.17 &$n=25$& 61.78 & 64.54 & 66.59 & 68.10 & 69.17 & 69.86 & 70.18 & 70.12 & 69.67 & 68.78 & 67.35 & 65.17 & & & & & \\
			69.08 &$n=26$& 64.08 & 66.92 & 69.05 & 70.64 & 71.80 & 72.58 & 73 & 73.07 & 72.76 & 72.05 & 70.87 & 69.08 & & & & & \\
			70.59 &$n=27$& 66.37 & 69.30 & 71.49 & 73.17 & 74.41 & 75.27 & 75.79 & 75.97 & 75.79 & 75.24 & 74.27 & 72.78 & 70.59 & & & & \\
			74.49 &$n=28$& 68.65 & 71.66 & 73.93 & 75.68 & 77.00 & 77.95 & 78.55 & 78.83 & 78.77 & 78.37 & 77.57 & 76.32 & 74.49 & & & & \\
			76 &$n=29$& 70.93 & 74.02 & 76.36 & 78.18 & 79.57 & 80.60 & 81.29 & 81.66 & 81.71 & 81.43 & 80.79 & 79.75 & 78.21 & 76 & & & \\
			79.91 &$n=30$& 73.21 & 76.36 & 78.78 & 80.66 & 82.13 & 83.23 & 84 & 84.46 & 84.61 & 84.45 & 83.95 & 83.08 & 81.77 & 79.91 & & & \\
			81.41 &$n=31$& 75.48 & 78.71 & 81.19 & 83.14 & 84.67 & 85.84 & 86.69 & 87.23 & 87.48 & 87.42 & 87.05 & 86.33 & 85.22 & 83.63 & 81.41 & & \\
			85.32 &$n=32$& 77.74 & 81.04 & 83.59 & 85.61 & 87.20 & 88.44 & 89.36 & 89.98 & 90.31 & 90.35 & 90.10 & 89.52 & 88.58 & 87.21 & 85.32 & & \\
			86.83 &$n=33$& 80.00 & 83.37 & 85.98 & 88.06 & 89.72 & 91.03 & 92.02 & 92.71 & 93.12 & 93.26 & 93.10 & 92.64 & 91.85 & 90.69 & 89.06 & 86.83 & \\
			90.74 &$n=34$& 82.26 & 85.69 & 88.37 & 90.51 & 92.23 & 93.60 & 94.65 & 95.42 & 95.91 & 96.13 & 96.07 & 95.72 & 95.07 & 94.07 & 92.66 & 90.74 & \\
			92.24 &$n=35$& 84.51 & 88.01 & 90.75 & 92.95 & 94.73 & 96.16 & 97.28 & 98.11 & 98.67 & 98.97 & 99 & 98.76 & 98.22 & 97.37 & 96.15 & 94.48 & 92.24 \\
			96.15 &$n=36$& 86.76 & 90.32 & 93.12 & 95.38 & 97.22 & 98.71 & 99.88 & 100.78 & 101.41 & 101.79 & 101.90 & 101.75 & 101.33 & 100.61 & 99.55 & 98.09 & 96.15 \\
			\bottomrule
		\end{tabular}}
		\label{tab:spectral_data}
		
	\end{sidewaystable}

	\section*{Declaration of Competing Interest}
	
	The authors declare that they have no known competing financial interests or personal relationships that could have appeared to influence the work reported in this paper.
	
	\section*{Data Availability}
	
	No data was used for the research described in the article.


\begin{thebibliography}{99}
		
		\bibitem{Ber}A. Berman, R.J. Plemmons, Nonnegative matrices in the mathematical sciences, Academic Press, New York, 1979.
		
		\bibitem{Bro} A.E. Brouwer, W.H. Haemers, Spectra of graphs, Springer, Berlin, 2011.
		
		\bibitem{God} C. Godsil, G. Royle, Algebraic graph theory, Springer-Verlag, NewYork, 2001.
		
			\bibitem{Jia} H.C. Jia, J. Lou, Sufficient conditions on the existence of factors in graphs involving minimum degree, Czech. Math. J., 2024, 74:1299-1311.
			
		\bibitem{Li} S.C. Li, W.T. Sun, Matching number, connectivity and eigenvalues of distance signless Laplacians, Linear Multilinear Algebra., 2021, 69(1):74-92.
		
		
		\bibitem{Liu} Y. Liu, J. Yan, Note on distance signless Laplacian spectral radius under given matching number, Linear Multilinear Algebra., 2022, 70(6):1185-1190.
		
		\bibitem{Lou}J. Lou, R.F. Liu, G.Y. Ao, Fractional matching, factors and spectral radius in graphs involving minimum degree, Linear Algebra Appl., 2023, 677: 337-351.	
		
	
		
		\bibitem{Pan} Y.G. Pan, J.P. Li, W. Zhao,  Signless Laplacian spectral radius, fractional matching number, fractional perfect matching,  Discrete. Math., 2020, 343(10):112016.
		
		\bibitem{Sch}E.R. Scheinerman, D.H. Ullman, Fractional graph theory: A rational approach to the theory of graphs, Wiley $\&$ Sons, New Jersey, 1997.
		
		\bibitem{Tut} W.T. Tutte, The factors of graphs, Can. J. Math., 1952, 4:314-328.
		
		\bibitem{Xu} Z.H. Xu, W.G. Xi, On the distance spectral radius, fractional matching and factors of graphs with given minimum degree, 2023. https://arxiv.org/abs/2311.08837
		
		\bibitem{Yan}J. Yan, Y. Liu, X.L. Su, Lower bounds of distance Laplacian spectral radii of n-vertex graphs in terms of fractional matching number, Oper. Res. Soc. China., 2021, 11:189-196.
		
		
	\end{thebibliography}
\end{document}